\begin{document}

%\begin{flushright}
%printed on \today
%\end{flushright}

\title{Doubly autoparallel structure \\
on the probability simplex}

\author{Atsumi Ohara\inst{1} \and Hideyuki Ishi\inst{2,3}}
%\inst{1} \and Roger Temam\inst{2} Jeffrey Dean \and David Grove \and Craig Chambers \and Kim~B.~Bruce \and Elsa Bertino} % \authorrunning{Ivar Ekeland et al.} % abbreviated author list (for running head) % %%%% list of authors for the TOC (use if author list has to be modified) 
%\tocauthor{Ivar Ekeland, Roger Temam, Jeffrey Dean, David Grove, Craig Chambers%, Kim B. Bruce, and Elisa Bertino} 
\institute{University of Fukui, Fukui 910-8507, Japan \\ \email{ohara@fuee.u-fukui.ac.jp}
\and
Nagoya University, Furo-cho, Nagoya 464-8602, Japan \\ 
\email{hideyuki@math.nagoya-u.ac.jp}
%,\\ WWW home page: bb
% \and Universit\'e de Paris-Sud, Laboratoire d'Analyse Num\'erique,\\
%F-91405 Orsay Cedex, France
\and
JST, PRESTO, 4-1-8, Honcho, Kawaguchi 332-0012, Japan
}

\maketitle              % typeset the title of the contribution

\begin{abstract}
On the probability simplex, we can consider 
the standard information geometric structure with the 
e- and m-affine connections mutually dual with respect to the Fisher metric.
The geometry naturally 
defines submanifolds simultaneously autoparallel 
for the both affine connections, which we call 
{\em doubly autoparallel submanifolds}.

In this note we discuss their several interesting common properties.
Further, we algebraically characterize 
doubly autoparallel submanifolds on the probability simplex and 
give their classification.
%and discuss applications to information science.

%We consider information geometry on the probability simplex
%to study doubly autoparallel submanifolds. 
%Information geometry is manifold structure equipped with 
%mutually dual affine connections with respect to Riemannian metric. 
%The geometry naturally  
%This is a joint work with H. Ishi.
\keywords{
statistical manifold, dual affine connections, 
doubly autoparallel submanifolds, mutation of Hadamard product
} 
\end{abstract}
%\footnote{printed on \today}

\section{Introduction}
Let us consider information geometric structure \cite{AN} 
$(g,\nabla, \nabla^*)$ on a manifold ${\cal M}$, 
where $g,\nabla, \nabla^*$ are, respectively, 
a Riemannian metric and a pair of torsion-free affine connections satisfying 
\[
	Xg(Y,Z)=g(\nabla _X Y, Z)+g(Y, \nabla^*_X Z), 
	\quad \forall X, Y, Z \in {\cal X}({\cal M}).
\]
Here, ${\cal X}({\cal M})$ denotes a set of vector fields on ${\cal M}$.
Such a manifold with the structure $(g, \nabla, \nabla^*)$ is called 
a {\em statistical manifold} and we say $\nabla$ and $\nabla^*$ are 
{\em mutually dual} with respect to $g$.
When curvature tensors of $\nabla$ and $\nabla^*$ vanish, 
the statistical manifold is said {\em dually flat}.
For a statistical manifold, we can introduce a one-parameter family 
of affine connections called {\em $\alpha$-connection}:
\[
	\nabla^{(\alpha)}=\frac{1+\alpha}{2}\nabla 
		+ \frac{1-\alpha}{2} \nabla^*, \quad \alpha \in {\bf R}.
\]
It is seen that $\nabla^{(\alpha)}$ and $\nabla^{(-\alpha)}$ are mutually dual 
with respect to $g$.

In a statistical manifold, we can naturally define a submanifold ${\cal N}$ 
that is simultaneously autoparallel with respect to both $\nabla$ and $\nabla^*$.
\begin{definition}
Let $({\cal M}, g, \nabla, \nabla^*)$ be a statistical manifold and 
${\cal N}$ be its submanifold.
We call ${\cal N}$ {\em doubly autoparallel} in ${\cal M}$ 
when the followings hold:
\[
	\nabla_X Y \in {\cal X}({\cal N}), \; 
	\nabla^*_X Y \in {\cal X}({\cal N}),	
	\quad \forall X, Y \in {\cal X}({\cal N}).
\]
\end{definition}

We immediately see that 
doubly autoparallel submanifolds ${\cal N}$ possess 
the following properties: (Note that 4) and 5) hold if ${\cal M}$ is dually flat.) 

\begin{proposition}
The following statements are equivalent:
\begin{itemize}
\item[1)] a submanifold ${\cal N}$ is doubly autoparallel (DA),
\item[2)] a submanifold ${\cal N}$ is autoparallel 
w.r.t. to $\nabla^{(\alpha)}$ for two different $\alpha$'s,
\item[3)] a submanifold ${\cal N}$ is autoparallel 
w.r.t. to $\nabla^{(\alpha)}$ 
for all $\alpha$'s,
\item[4)] the $\alpha$-geodesics connecting two points on ${\cal N}$ 
(if they exist) lay in ${\cal N}$ for all $\alpha$'s,
\item[5)] a submanifold ${\cal N}$ is affinely constrained 
in both $\nabla$- and $\nabla^*$-affine coordinates of ${\cal M}$.
\end{itemize}
Furthermore, when ${\cal M}$ is dually flat and ${\cal N}$ is DA, 
the $\alpha$-projections 
to ${\cal N}$ (if they exist) are unique for all $\alpha$'s.
\end{proposition}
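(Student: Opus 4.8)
The plan is to prove the core equivalence 1)$\Leftrightarrow$2)$\Leftrightarrow$3) on an arbitrary statistical manifold, and then to characterise 4), 5) and the final uniqueness claim in the dually flat setting flagged by the note.

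For 1)$\Leftrightarrow$2)$\Leftrightarrow$3) I would exploit that the whole family $\{\nabla^{(\alpha)}\}$ is the affine line spanned by $\nabla$ and $\nabla^*$ in the space of connections. Fix $X,Y\in{\cal X}({\cal N})$. Since $\nabla^{(\alpha)}_X Y=\tfrac{1+\alpha}{2}\nabla_X Y+\tfrac{1-\alpha}{2}\nabla^*_X Y$ and ${\cal X}({\cal N})$ is closed under linear combinations, 1) immediately gives $\nabla^{(\alpha)}_X Y\in{\cal X}({\cal N})$ for every $\alpha$, i.e.\ 3); and 3)$\Rightarrow$2) is trivial. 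The only real computation is 2)$\Rightarrow$1): given autoparallelism for two values $\alpha_1\neq\alpha_2$, I would read
\[
 \begin{pmatrix}\nabla^{(\alpha_1)}_X Y\\[2pt]\nabla^{(\alpha_2)}_X Y\end{pmatrix}
 =\frac12\begin{pmatrix}1+\alpha_1 & 1-\alpha_1\\ 1+\alpha_2 & 1-\alpha_2\end{pmatrix}
 \begin{pmatrix}\nabla_X Y\\[2pt]\nabla^*_X Y\end{pmatrix}
\]
as a linear system whose coefficient matrix has determinant $\tfrac12(\alpha_1-\alpha_2)\neq0$; inverting it expresses both $\nabla_X Y$ and $\nabla^*_X Y$ as linear combinations of the two left-hand members, each of which lies in ${\cal X}({\cal N})$, so ${\cal N}$ is DA.

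For 3)$\Leftrightarrow$5) I would use that in $\nabla$-affine coordinates the Christoffel symbols vanish, so a submanifold is $\nabla$-autoparallel iff it is an open piece of an affine subspace in those coordinates; applying this to $\nabla$ and to $\nabla^*$ and intersecting shows that ${\cal N}$ is DA iff it is affinely constrained in both affine charts, which is exactly 5). For 3)$\Leftrightarrow$4) I would invoke that autoparallelism with respect to $\nabla^{(\alpha)}$ is equivalent to the induced connection on ${\cal N}$ being the restriction of $\nabla^{(\alpha)}$, so that $\nabla^{(\alpha)}$-geodesics joining two points of ${\cal N}$ coincide with geodesics of the induced connection and hence stay in ${\cal N}$; conversely, if for all $\alpha$ these geodesics remain in ${\cal N}$, then taking $\alpha=\pm1$ (straight segments in the respective affine charts) forces ${\cal N}$ to be affine in both charts, giving 5) and thus, via 3)$\Leftrightarrow$5), closing the loop.

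For the uniqueness claim I would realise the $\alpha$-projection of a point $p$ onto ${\cal N}$ as a critical point of the canonical $\alpha$-divergence $D^{(\alpha)}(p\,\|\,\cdot)$ restricted to ${\cal N}$. The decisive leverage is that, by 5), a DA submanifold is affinely constrained in \emph{both} the $\nabla$- and $\nabla^*$-affine coordinates at once. Since $D^{(\alpha)}(p\,\|\,\cdot)$ is strictly convex in the mixture ($\nabla^*$-affine) coordinates (on the simplex this is the $f$-divergence representation with strictly convex generator, whose Hessian is the positive-definite Fisher metric), and ${\cal N}$ is affine in exactly those coordinates, the restriction is a strictly convex function on an affine set and so has at most one minimiser; hence the $\alpha$-projection is unique. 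The main obstacle I anticipate is the intermediate range $\alpha\in(-1,1)$, where $\nabla^{(\alpha)}$ is not flat and the clean ``Hessian $=$ metric'' argument available at $\alpha=\pm1$ must be replaced by explicit strict convexity of the $\alpha$-divergence in the correct chart, together with the bookkeeping that selects, for each $\alpha$, a divergence convex in a chart where ${\cal N}$ is affine; the reason this succeeds uniformly in $\alpha$ is precisely the \emph{double} autoparallelism, which makes ${\cal N}$ affine in whichever chart renders the relevant divergence convex.
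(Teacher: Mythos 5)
The paper offers no proof of this proposition at all --- it is introduced with ``We immediately see that \dots'' and simply asserted --- so there is no argument of the authors' to compare yours against; what you have written is the standard proof that the authors treat as folklore, and it is essentially correct. Your core equivalence 1)$\Leftrightarrow$2)$\Leftrightarrow$3) via the invertible $2\times 2$ matrix with determinant $\tfrac12(\alpha_1-\alpha_2)$ is exactly right and needs no flatness hypothesis; the identification of autoparallel submanifolds of a flat connection with affine subspaces in the affine chart correctly delivers 5), matching the paper's remark that 4) and 5) require dual flatness. Two soft spots are worth tightening. First, in 3)$\Rightarrow$4) you pass from ``geodesics of the induced connection stay in ${\cal N}$'' to ``the $\alpha$-geodesics of ${\cal M}$ connecting two points of ${\cal N}$ stay in ${\cal N}$''; this silently uses uniqueness of the connecting geodesic, which is what the proposition's parenthetical ``(if they exist)'' is implicitly covering --- you should say so. Second, for the uniqueness of the $\alpha$-projection your strict-convexity argument works cleanly for $\alpha\in(-1,1]$, where $-q^{(1+\alpha)/2}$ is strictly convex in the mixture chart in which ${\cal N}$ is affine, but for $|\alpha|>1$ the generator is no longer convex in that chart and you must switch to the dual chart (using $D^{(\alpha)}(p\,\|\,q)=D^{(-\alpha)}(q\,\|\,p)$ and the fact that a DA submanifold is affine there too); you gesture at this ``bookkeeping'' but do not carry it out, and it is the one place where double autoparallelism, rather than single autoparallelism, is genuinely indispensable. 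Neither issue is a wrong turn; both are completions of a sketch rather than repairs of an error.
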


The concept of doubly autoparallelism has sometimes appeared 
but played important roles in several applications of 
information geometry \cite{Oh99,Oh04,OW10,UO04}.
However, the literature mostly treat information geometry 
of positive definite matrices or symmetric cones, 
and the study for statistical models has not been exploited yet. 

In this note, we consider doubly autoparallel structure 
on the probability simplex, which can be identified with 
probability distributions on discrete and finite sample spaces.
As a result, we give an algebraic characterization and classification of 
doubly autoparallel submanifolds in the probability simplex. 

Such manifolds commonly possess the above interesting properties.
Hence, the obtained results might be expected to give 
a useful insight into constructing statistical models 
for wide area of applications in information science, 
mathematics and statistical physics and so on \cite{ITA04,MA04,Montufar13}. 
Further, it should be mentioned that 
Nagaoka has recently reported the significance of this concept  
in study of Markov equivalence for statistical models \cite{Nag17}.

\section{Preliminaries}
\label{prel}
\subsection{Information geometry of ${\cal S}^n$ and ${\bf R}_+^{n+1}$}
Let us represent an element $p \in {\bf R}^{n+1}$ 
with its components $p_i, \; i=1,\cdots,n+1$ 
as $p=(p_i) \in {\bf R}^{n+1}$.
Denote, respectively, the positive orthant by 
\[
	{\bf R}^{n+1}_{+}:=\{p=(p_i) \in {\bf R}^{n+1}| p_i >0, \; 
	i=1, \cdots, n+1\},
\]
and the relative interior of the probability simplex by
\[
	{\cal S}^n:= \left\{ p \in {\bf R}^{n+1}_{+} \left| 
		\sum_{i=1}^{n+1} p_i =1 \right. \right\}.
\]
For a subset ${\cal Q} \subset {\bf R}_+^{n+1}$ and 
an element $p \in {\cal Q}$, we simply write
\[
	\log {\cal Q}:=\{\log p | p \in {\cal Q}\}, \quad 
	\log p:=(\log p_i) \in {\bf R}^{n+1}.
\]

Each element $p$ in the closure of ${\cal S}^n$ denoted by 
${\rm cl}{\cal S}^n$ can be identified with 
a discrete probability distribution for the sample space 
${\rm \Omega}=\{1,2, \cdots, n, n+1 \}$.
However, we only consider distributions $p(X)$ with positive probabilities, 
i.e., $p(i)=p_i>0, \; i=1,\cdots, n+1$, defined by
\[
	p(X)=\sum_{i=1}^{n+1} p_i \delta_i(X), \quad \delta_i(j)=\delta_i^j \;
	(\mbox{the Kronecker's delta}),
\]
which is identified with ${\cal S}^n$.

A statistical model in ${\cal S}^n$ is represented with 
parameters $\xi=(\xi_j), \; j=1,\cdots,d \le n$ by
\[
	p(X;\xi)=\sum_{i=1}^{n+1} p_i(\xi) \delta_i(X),
\]
where each $p_i$ is a function of $\xi$.
For example, $p_i=\xi_i, \; i=1,\cdots,n$ with the condition 
$\sum_{i=1}^n \xi_i <1$ is the full model, i.e., 
\[
	p(X;\xi)=\sum_{i=1}^n \xi_i \delta_i(X) 
		+\left( 1-\sum_{i=1}^n \xi_i \right) \delta_{n+1}(X)
\]
For the submodel, $\xi^j, \; j=1,\cdots,d<n$ can be also regarded as 
coordinates of the corresponding submanifold in ${\cal S}^n$. 

The standard information geometric structure on ${\cal S}^n$ \cite{AN} 
denoted by $(g,\nabla^{({\rm e})},\nabla^{({\rm m})})$ are composed of 
the pair of flat affine connections $\nabla^{({\rm e})}$ and $\nabla^{({\rm m})}$.
The affine connections $\nabla^{({\rm e})}=\nabla^{(1)}$ and 
$\nabla^{({\rm m})}=\nabla^{(-1)}$ are respectively called 
the {\em exponential connection} and the {\em mixture connection}. 
They are mutually dual with respect to the Fisher metric $g$.

By writing $\partial_i :=\partial/\partial \xi_i, \;i=1,\cdots,n $, 
they are explicitly represented  %%%%% by $p_i, \;i=1,\cdots,n$ 
as follows:
\begin{eqnarray}
	g_{ij}(p) & = & \sum_{X \in {\rm \Omega}} 	
		p(X)(\partial_i \log p(X))(\partial_j \log p(X)), 
		\quad i,j=1,\cdots,n, \nonumber \\
%	& = & \sum_{i=1}^{n+1} p_i (\partial_i \log p)(\partial_j \log p) 
%		=g_{ij}(p)=\frac{1}{p_i}\delta_{ij}+\frac{1}{p_{n+1}},
%		\quad i,j=1,\cdots,n \nonumber \\
	\Gamma_{ij,k}^{({\rm m})}(p) %%%%%% &=&\Gamma_{ijk}^{(-1)}(p) 
	 &=&  \sum_{X \in {\rm \Omega}} 	
		p(X)(\partial_i \partial_j p(X))(\partial_k \log p(X)) %%% =0, 
		\quad \quad i,j,k=1,\cdots,n, 
\label{m-connex} \\
	\Gamma_{ij,k}^{({\rm e})}(p) %%%%%%%   =\Gamma_{ijk}^{(1)}(p)
	& = & \sum_{X \in {\rm \Omega}} 	
		p(X)(\partial_i \partial_j \log p(X))(\partial_k \log p(X)), 
		\quad i,j,k=1,\cdots,n. 
\label{e-connex} 
%%	& = & \frac{1}{(p_{n+1})^2} -\frac{1}{(p_i)^2}\delta_{ijk},
%		\quad i,j,k=1,\cdots,n. \nonumber
\end{eqnarray}

There exist two special coordinate systems.
The one is the {\em expectation coordinate} 
$\eta_i:=\sum_{X \in {\rm \Omega}}p(X)\delta_i(X)=p_i, \; i=1,\cdots,n$, 
which is 
$\nabla^{({\rm m})}$-affine from (\ref{m-connex}).
%\[
%	\eta_i=\sum_{X \in {\rm \Omega}}p(X)\delta_i(X)=p_i.
%\]
It implies that if each $\eta_i$ is an affine function
of all the model parameters $\xi_i$'s, 
then the statistical model is {\em $\nabla^{({\rm m})}$-autoparallel} 
(or sometimes called {\em m-flat}).

The other is the {\em canonical coordinate} $\theta^i$, 
which is defined by
\begin{equation}
	\theta^i:=\log \left(\frac{p_i}{1-\sum_{i=1}^n p_i}\right),
	\quad i=1,\cdots,n.
\label{e-affine}
\end{equation}
Since $\theta^i$'s satisfy
\[
	p(X)=\exp \left\{\sum_{i=1}^n 
		\theta^i \delta_i(X) -\psi(\theta) \right\}, \quad
	\psi(\theta):=\log \left(1+\sum_{i=1}^n \exp \theta^i \right),
%\label{e-affine}
\]
they are $\nabla^{({\rm e})}$-affine from (\ref{e-connex}).
Hence, it implies that if each $\theta^i$ is an affine function 
of all the model parameters $\xi_i$'s, then the statistical model is 
{\em $\nabla^{({\rm e})}$-autoparallel} 
(or sometimes called {\em e-flat}).

Note that from the property of the expectation coordinates, 
a $\nabla^{({\rm e})}$-autoparallel submanifold in ${\cal S}^n$, 
denoted by $M$, should be 
represented by $M=W \cap {\cal S}^{n}$ 
for a certain subspace $W \subset {\bf R}_+^{n+1}$.
This fact is used later.

Finally, we introduce information geometric structure 
$(\tilde g,\tilde \nabla^{({\rm e})},\tilde \nabla^{({\rm m})})$ on ${\bf R}_+^{n+1}$.
The structure $(g,\nabla^{({\rm e})},\nabla^{({\rm m})})$ 
on ${\cal S}^n$ is a submanifold geometry induced from  
this ambient structure.
%$({\bf R}_+^{n+1},\tilde g,\tilde \nabla^{({\rm e})},\tilde \nabla^{({\rm m})})$.
For arbitrary coordinates $\tilde \xi_i, \; i=1,\cdots,n+1$ 
of ${\bf R}_+^{n+1}$, let us take 
$\tilde \partial_i:=\partial/\partial \tilde \xi_i$.
Then their components are given by
\begin{eqnarray}
	\tilde g_{ij}(p) &=&  \sum_{X \in {\rm \Omega}} 	
		p(X)(\tilde \partial_i \log p(X))(\tilde \partial_j \log p(X)),
%		=\frac{\delta_{ij}}{p_i} 
		\quad i,j=1,\cdots,n+1, \nonumber \\
	\tilde \Gamma_{ij,k}^{({\rm m})}(p) 
	& = & \sum_{X \in {\rm \Omega}} 	
		p(X)(\tilde \partial_i \tilde \partial_j p(X))
		(\tilde \partial_k \log p(X)),
%	=0 
		\quad i,j,k=1,\cdots,n+1, 
\label{m_aff_amb} \\
	\tilde \Gamma_{ij,k}^{({\rm e})}(p)
	& = & \sum_{X \in {\rm \Omega}} 	
		p(X)(\tilde \partial_i \tilde \partial_j \log p(X))
		(\tilde \partial_k \log p(X)),
%		=-\frac{\delta_{ij}^k}{p_i} 
		\quad i,j,k=1,\cdots,n+1.
\label{e_aff_amb}
\end{eqnarray}
Thus, we find
that $p_i$'s are $\tilde \nabla^{({\rm m})}$-affine coordinates 
and $\log p_i$'s are $\tilde \nabla^{({\rm e})}$-affine coordinates, respectively, 
from (\ref{m_aff_amb}), (\ref{e_aff_amb}) and 
$\log p(X)=\sum_{X \in \Omega} (\log p_i)\delta_i(X) $.

%\[
%	\Gamma_{ijk}^{({\rm m})}(x)=0, \quad 
%	\Gamma_{ijk}^{({\rm e})}(x)=
%		\frac{1}{(x^{n+1})^2} -\frac{1}{(x^i)^2}\delta_{ijk}
%		\quad i,j,k=1,\cdots,n
%\]
%\[
%	g_{ij}(x)=\frac{1}{x^i}\delta_{ij}+\frac{1}{x^{n+1}}
%		\quad i,j=1,\cdots,n
%\]
%\[
%	\Gamma_{ijk}^{({\rm m})}(x)=0, \quad 
%	\Gamma_{ijk}^{({\rm e})}(x)=
%		\frac{1}{(x^{n+1})^2} -\frac{1}{(x^i)^2}\delta_{ijk}
%		\quad i,j,k=1,\cdots,n
%\]

\subsection{An example}
\label{expr}
%\begin{expl}
%{\bf An example}
{\bf Example:} 
Let $v^{(k)}=(\delta_i^k) \in {\bf R}^{n+1}, \; k=1,\cdots,n+1$ represent 
vertices on ${\rm cl}{\cal S}^n$. 
Take a vector $v^{(0)} \in {\bf R}^{n+1}_{+}$ that is linearly independent 
of $\{v^{(k)} \}_{k=1}^d \; (d <n)$ and define a subspace of 
dimension $d+1$ by $W={\rm span}\{v^{(0)},v^{(1)}, \cdots, v^{(d)}\}$.
Then $M={\cal S}^n \cap W$ is doubly autoparallel.

We show this for the case $d=2$ but 
similar arguments hold for general $d$.
%Without loss of generalities we can restrict $v^{(0)} \in {\cal S}^n$.
For the simplicity we take the following $v^{(i)}, \; i=0,1,2$ :
\begin{eqnarray*}
& \displaystyle	
	v^{(0)}=(0 \; 0 \; p_3 \; \cdots \; p_{n+1})^T, \quad 
	\sum_{i=3}^{n+1} p_i=1, \; p_i >0, \; i=3,\cdots,n+1, & \\
&	v^{(1)}=(1 \; 0 \; \cdots \; 0)^T, \quad 
	v^{(2)}=(0 \; 1 \; 0\; \cdots \; 0)^T,
	\quad \mbox{($\; \cdot \; ^T$ denotes the transpose)}. &
\end{eqnarray*}
Since for $p \in M$ we have a convex combination by parameters $\xi_i$ as
\[
	p=\xi_1 v^{(1)}+\xi_2 v^{(2)}+(1-\xi_1-\xi_2)v^{(0)},
\]
the expectation coordinates $\eta_i$'s are 
\begin{eqnarray*}
&	\eta_1=\xi_1, \; \eta_2=\xi_2, \; 
		\eta_i=(1-\xi_1-\xi_2)p_i, \; i=3,\cdots,n+1, & \\
&	(\xi_1>0, \; \xi_2 >0, \; \xi_1+\xi_2 <1).&
\end{eqnarray*}
Thus, each $\eta_i$ is affine in $\xi_i, \; i=1,2$.

On the other hand, the canonical coordinates $\theta^i$'s are 
\begin{eqnarray*}
&	\theta^1=\zeta_1, \; \theta^2=\zeta_2, \; 
	\theta^i=\log p_i +c, \; i=3,\cdots,n+1, & \\
&	(\zeta_i=\log \{\xi_i/(1-\xi_1-\xi_2)\}, \; i=1,2, \; c=-\log p_{n+1}). &
\end{eqnarray*}
Thus, each $\theta^i$ is affine in parameters $\zeta_i, \; i=1,2$.
Hence, $M$ is doubly autoparallel.

%We show this by demonstrating that 
%$\nabla^{({\rm m})}$-affine coordinates $\eta=(\eta_i)$ and 
%$\nabla^{({\rm e})}$-affine coordinates $\theta=(\theta^i)$ are, respectively, 
%affine with respect to certain parameters.

%\end{expl}

\subsection{Denormalization}
\begin{definition}
Let $M$ be a submanifold in ${\cal S}^n$.
The submanifold $\tilde M$ in ${\bf R}_+^{n+1}$ defined by
\[
	\tilde M=\{\tau p \in {\bf R}_+^{n+1}| \; p \in M, \; \tau >0 \}
\]
is called a {\em denormalization} of $M$ \cite{AN}.
\end{definition}
\begin{lemma}
\label{lem1}
A submanifold $M$ is $\nabla^{(\pm 1)}$-autoparallel in ${\cal S}^n$ 
if and only if the denormalization $\tilde M$ is 
$\tilde \nabla^{(\pm 1)}$-autoparallel in ${\bf R}_{+}^{n+1}$.
\end{lemma}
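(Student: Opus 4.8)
The plan is to avoid Christoffel symbols entirely and instead use the standard criterion underlying Section~\ref{prel}: for a flat connection a submanifold is autoparallel precisely when it is cut out by affine equations in the associated affine coordinates. On ${\cal S}^n$ these coordinates are $\eta_i=p_i$ for $\nabla^{({\rm m})}$ and $\theta^i=\log p_i-\log p_{n+1}$ for $\nabla^{({\rm e})}$, while on ${\bf R}_+^{n+1}$ they are $p_i$ for $\tilde\nabla^{({\rm m})}$ and $\log p_i$ for $\tilde\nabla^{({\rm e})}$. So the whole lemma reduces to comparing these affine descriptions across the cone map $p\mapsto\tau p$ that defines the denormalization, treating the two signs separately.

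For the mixture sign the affine coordinates are literally the same ($p_i$) on both manifolds, so I would do this case first. Here $M$ is $\nabla^{({\rm m})}$-autoparallel iff $M=V\cap{\cal S}^n$ for an affine subspace $V$ of the hyperplane $H=\{\sum_i p_i=1\}$, whereas $\tilde M$ is $\tilde\nabla^{({\rm m})}$-autoparallel iff $\tilde M$ is affine in $p$. Since $\tilde M$ is a cone, any affine subspace containing it is forced through the origin and hence is linear, so the ambient condition reads $\tilde M=W\cap{\bf R}_+^{n+1}$ with $W$ a linear subspace. The equivalence then comes down to the identity $W\cap H=V$ for $W={\rm span}\,V$: in one direction I take $W={\rm span}\,V$ and verify $\tilde M=W\cap{\bf R}_+^{n+1}$ by normalizing an arbitrary $q\in W\cap{\bf R}_+^{n+1}$ to $q/\sum_i q_i\in V$; in the other I recover $M$ by intersecting $\tilde M=W\cap{\bf R}_+^{n+1}$ back with $H$.

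For the exponential sign the affine coordinates are no longer the same on the two spaces, and this is where I expect the real work. The key observation is that denormalization is translation along the all-ones direction in logarithmic coordinates: writing ${\bf 1}=(1,\dots,1)^T$, we have $\log(\tau p)=\log p+(\log\tau){\bf 1}$, so $\log\tilde M=\log M+{\bf R}{\bf 1}$. Now $M$ is $\nabla^{({\rm e})}$-autoparallel iff $\theta(M)$ is affine, and substituting $\theta^i=\log p_i-\log p_{n+1}$ turns each such affine equation into a linear equation $\sum_i c_i\log p_i=d$ whose coefficient vector satisfies $\sum_i c_i=0$, i.e.\ is orthogonal to ${\bf 1}$. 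Exactly these equations are unchanged when one adds a multiple of ${\bf 1}$, so they cut out $\log M$ iff they cut out $\log M+{\bf R}{\bf 1}=\log\tilde M$; this gives $\nabla^{({\rm e})}$-autoparallelism of $M$ $\Rightarrow$ $\tilde\nabla^{({\rm e})}$-autoparallelism of $\tilde M$. Conversely, an affine description of the scale-invariant set $\log\tilde M$ is forced to use ${\bf 1}$-orthogonal covectors, and these translate back into affine equations in $\theta$, yielding $M$ autoparallel.

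The main obstacle is thus the exponential case: unlike the mixture case the affine charts on ${\cal S}^n$ and ${\bf R}_+^{n+1}$ do not coincide, so the proof hinges on correctly identifying the denormalization with the ${\bf R}{\bf 1}$-gauge in $\log$-coordinates and on checking that the sum-to-zero constraint on the defining covectors is precisely the invariance that survives that gauge. Everything else is bookkeeping with linear spans and normalization.
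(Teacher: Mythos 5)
Your proof is correct, and it is essentially the argument the paper itself relies on: the paper states Lemma~\ref{lem1} without proof (the denormalization notion is quoted from \cite{AN}), but the ``key observation'' immediately following it is precisely your two coordinate characterizations --- $\tilde M=W\cap{\bf R}_+^{n+1}$ with $W$ linear for the mixture connection, and $\log\tilde M=b+V$ for the exponential one, with the denormalization acting as translation along ${\bf R}{\bf 1}$ in $\log$-coordinates so that ${\bf 1}\in V$ and the defining covectors sum to zero. Your observation that the affine hull of the cone $\tilde M$ must pass through the origin, and the identity $({\rm span}\,V)\cap H=V$ for an affine $V$ inside the hyperplane $H$, supply exactly the details the paper leaves implicit.
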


A key observation derived from the above lemma is as follows:

Since $p_i, \; i=1,\cdots,n+1$ are $\nabla^{({\rm m})}$-affine coordinates 
for ${\cal S}^n$, a submanifold $M \subset {\cal S}^n$ is 
$\nabla^{({\rm m})}$-autoparallel if and only if it is represented as
$M=W \cap {\cal S}^n$ for a subspace $W \subset {\bf R}^{n+1}$.
Hence, by definition $\tilde M$ is nothing but
\begin{equation}
	\tilde M=W \cap {\bf R}_+^{n+1}.
\label{m_rep}
\end{equation}

On the other hand, since the coordinates 
$\log p_i, \; i=1,\cdots, n+1$ for ${\bf R}^{n+1}$ are 
$\tilde \nabla^{({\rm e})}$-affine,
$\tilde M$ is $\tilde \nabla^{({\rm e})}$-autoparallel if and only if 
there exist a subspace $V \subset {\bf R}^{n+1}$ and a constant element 
$b \in {\bf R}^{n+1}$ satisfying 
\begin{equation}
	\log \tilde M =b+V,
\label{e_rep}
\end{equation}
where ${\rm dim}W={\rm dim}V$.
If so, $M$ is also $\nabla^{({\rm e})}$-autoparallel from lemma \ref{lem1}.

Thus, we study conditions for the denormalization $\tilde M$ to have 
simultaneously dualistic representations (\ref{m_rep}) and (\ref{e_rep}), 
which is equivalent to doubly autoparallelism of $M$.

\section{Main results}
First we introduce an algebra $({\bf R}^{n+1}, \circ)$ via 
the Hadamard product $\circ$, i.e.,
\[
	x \circ y=(x_i) \circ (y_i):=(x_iy_i), \quad x,y \in {\bf R}^{n+1},
\]
where the identity element $e$ and an inverse $x^{-1}$ are 
\[
	e={\bf 1}, \quad x^{-1}=\left( \frac{1}{x_i} \right),
\]
respectively. 
Here, ${\bf 1} \in {\bf R}^{n+1}_+$ is 
the element all the components of which are one. 
Note that the set of invertible elements 
\[
	{\cal I}:=\{x=(x_i) \in {\bf R}^{n+1}| x_i \not=0,\; i=1,\cdots,n+1 \}
\]
%Note that if $a \in {\bf R}^{n+1}_+$ then it holds that $a \in {\cal I}$.
contains ${\bf R}^{n+1}_+$.
We simply write $x^k$ for the powers recursively defined by 
$x^k=x \circ x^{k-1}$.

For an arbitrarily fixed $a \in {\cal I}$ the algebra 
$({\bf R}^{n+1}, \circ)$ induces another algebra called a {\em mutation} 
$({\bf R}^{n+1}, \circ_{a^{-1}})$, the product of which is defined by
\[
	x \circ_{a^{-1}} y:=x \circ a^{-1} \circ y=(x_iy_i/a_i), 
	\quad x,y \in {\bf R}^{n+1},
\]
with its identity element $a$.
We write $x^{(\circ a^{-1})k}$ for the powers by $\circ_{a^{-1}}$.
% defined by $x^{\circ a^{-1}k}=x \circ_{a^{-1}} x^{\circ a^{-1}(k-1)}$.
%$(\circ_{a^{-1}})^k x$

We give a basic result in terms of $({\bf R}^{n+1}, \circ)$.
\begin{theorem}
Assume that $a \in \tilde M=W \cap {\bf R}_{+}^{n+1}$.
Then, there exists a subspace $V$ satisfying
\[
	\log \tilde M=\log \left\{(a+W) \cap {\bf R}_{+}^{n+1} \right\} 
	= \log a +V
\]
if and only if the following two conditions hold:
\[
	1)\; V=a^{-1}\circ W, \quad 
	2)\; \forall u, w \in W, \;\; u \circ a^{-1} \circ w \in W.
%(b^i):=(\sqrt{a^i}) \in {\bf R}_{++}^{n+1}
\] 
\end{theorem}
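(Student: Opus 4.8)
The plan is to translate the affine condition into a statement about the componentwise exponential map and then to exploit that, with respect to the mutation product $\circ_{a^{-1}}$, the subspace $W$ is a unital subalgebra precisely when condition 2) holds. Since $a \in \tilde M \subseteq W$ and the componentwise logarithm is a bijection ${\bf R}_+^{n+1}\to{\bf R}^{n+1}$, the desired equality $\log\tilde M=\log a+V$ is equivalent to $\tilde M=\{a\circ\exp(v):v\in V\}$, where $\exp$ is taken componentwise. The computational bridge I would establish first is the identity $a\circ\exp(a^{-1}\circ w)=\sum_{k\ge 0}\frac{1}{k!}\,w^{(\circ a^{-1})k}$, verified componentwise using $w^{(\circ a^{-1})k}=w^{k}\circ a^{-(k-1)}$. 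This exhibits $a\circ\exp(a^{-1}\circ w)$ as the exponential of $w$ taken inside the mutation algebra, and it is exactly what links the analytic map $\exp$ to the algebraic closure condition 2).

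For the forward direction, necessity of 1) is a tangent-space computation: if $\log\tilde M=\log a+V$ is affine, then its tangent space at $\log a$ is $V$, whereas $\tilde M=W\cap{\bf R}_+^{n+1}$ has tangent space $W$ at the interior point $a$, and the derivative of the componentwise logarithm at $a$ is multiplication by $a^{-1}$; hence $V=a^{-1}\circ W$. For necessity of 2), I would fix $w\in W$ and consider the curve $\gamma(t)=a\circ\exp(t\,a^{-1}\circ w)$. Since $t\,a^{-1}\circ w\in V$, the hypothesis forces $\gamma(t)\in\tilde M\subseteq W$ for all $t$, and because $W$ is a (closed) linear subspace it contains every derivative of $\gamma$. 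Differentiating twice at $t=0$ gives $\gamma''(0)=w\circ a^{-1}\circ w=w\circ_{a^{-1}}w\in W$, and polarizing this symmetric bilinear product then yields $u\circ_{a^{-1}}w\in W$ for all $u,w\in W$, which is condition 2).

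For the converse I would assume 1) and 2) and show $\Phi(V)=\tilde M$ for the injective map $\Phi\colon V\to{\bf R}_+^{n+1}$, $\Phi(v)=a\circ\exp(v)$. Condition 2) together with $a\in W$ makes $W$ closed under $\circ_{a^{-1}}$, so the series identity above gives $\Phi(V)\subseteq W\cap{\bf R}_+^{n+1}=\tilde M$; moreover $D\Phi_v(u)=\Phi(v)\circ_{a^{-1}}(a\circ u)$ sends $V$ bijectively onto $W$ (again by condition 2), so $\Phi$ is a local diffeomorphism onto the $(\dim W)$-dimensional manifold $\tilde M$ and $\Phi(V)$ is open in $\tilde M$. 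The hard part is global surjectivity: the naive inverse $v=\log(a^{-1}\circ p)$ is only seen to lie in $V$ through the mutation-logarithm power series when $p$ is close to $a$. I would settle this topologically rather than analytically. The set $\Phi(V)$ is also closed in $\tilde M$, because $\Phi(v_k)\to p\in\tilde M$ forces $v_k\to\log(a^{-1}\circ p)$ componentwise, and this limit lies in the finite-dimensional, hence closed, subspace $V$. Since $\tilde M=W\cap{\bf R}_+^{n+1}$ is convex and therefore connected, the nonempty subset $\Phi(V)$ that is both open and closed must be all of $\tilde M$. This yields $\tilde M=\{a\circ\exp(v):v\in V\}$ and hence $\log\tilde M=\log a+V$, completing the equivalence.
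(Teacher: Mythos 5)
Your proof is correct, and on the ``if'' half it takes a genuinely different route from the paper. The ``only if'' half is essentially the paper's argument seen in a mirror: the paper differentiates $(s,t)\mapsto\log(a+su+tw)$ inside the affine space $\log a+V$ and reads off $-a^{-1}\circ u\circ a^{-1}\circ w\in V$ from the mixed second derivative, while you differentiate $t\mapsto a\circ\exp(t\,a^{-1}\circ w)$ inside the linear space $W$ and recover the bilinear product by polarization of $\gamma''(0)=w\circ a^{-1}\circ w$; both are second-order Taylor arguments, just run through opposite inclusions of the hypothesis. The ``if'' half is where you diverge. The paper works with the logarithm: it rewrites $a+w=ta+u$ with $t$ large enough that $ta_i>|u_i|$, expands $\log\{e+(ta)^{-1}\circ u\}$ by the series for $\log(1+x)$, and concludes $\log(a+w)\in\log a+V$ term by term; this yields the inclusion $\log\tilde M\subseteq\log a+V$ directly but, as written, leaves the reverse inclusion implicit. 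You work with the exponential instead: the series identity $a\circ\exp(a^{-1}\circ w)=\sum_{k\ge 0}\frac{1}{k!}w^{(\circ a^{-1})k}$ converges unconditionally, so no rescaling trick is needed, and it gives the opposite inclusion $\log a+V\subseteq\log\tilde M$ directly; you then supply the remaining inclusion by showing $\Phi(V)$ is open (invertible differential) and closed in the convex, hence connected, set $\tilde M$. Your route buys two things: it avoids the convergence bookkeeping forced by the finite radius of the logarithmic series, and it makes fully explicit the surjectivity step that upgrades a one-sided inclusion to the claimed equality of sets. The paper's route is shorter on the page and has the side benefit that the same expansion, applied at $a'=a+w$, immediately gives the remark that $\log a+V$ does not depend on the choice of base point.
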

\begin{proof}
(``only if" part): For all $w \in W$ and small $t \in {\bf R}_+$, 
we have $\log(a+tw) \in \log a +V$.
Hence, it holds that
\[
	\left. \frac{d}{dt} \log(a+tw) \right|_{t=0} 
	= a^{-1} \circ w \in V.
\]
Thus, the condition 1) holds.

Similarly, for all $u, w \in W$ and 
small $t \in {\bf R}_+$ and $s \in {\bf R}_+$,
we have $\log(a+su+tw) \in \log a +V$ and obtain
\[
	\left. \frac{\partial}{\partial s} 
	\left( \left. \frac{\partial}{\partial t} \log(a+su+tw) \right|_{t=0} 
	\right) \right|_{s=0}=-a^{-1} \circ u \circ a^{-1} \circ w \in V. 
\]
Hence, we see that the condition 2) holds, using the condition 1). 

(``if" part): For $w=(w_i) \in W$ 
satisfying $a+w \in {\bf R}^{n+1}_+$, 
take $t \in {\bf R}_+$ be larger than $(1+\max_i\{ w_i/a_i \})/2$.
Then there exists $u=(u_i) \in W$ satisfying
\begin{equation}
	a+w=t a+u, \quad ta_i > |u_i|, \; i=1, \cdots,n+1.
\label{ineq_expand}
\end{equation}
%because $a \in W \cap {\bf R}_+^{n+1}$.
Hence, we have
\begin{eqnarray}
	\log(a+w)&=&\log(ta+u)=\log \{(ta)\circ(e+(ta)^{-1}\circ u)\} 
	\nonumber \\
	&=&(\log t)e + \log a + \log \{e+(ta)^{-1}\circ u\}. 
%	\log(a+w)&=&\log (a\circ(e+a^{-1}\circ w))
%	=\log a +\log (e+a^{-1}\circ w). 
\label{expand}
%	\\
%	&=&\log a + a^{-1} \circ 
%		\left( \sum_{k=1}^\infty \frac{(-1)^{k-1}}{k} 
%		w^{(\circ a^{-1})k}
%		\right).
\end{eqnarray}
%Since $e+a^{-1}\circ w \in {\bf R}_+^{n+1}$, it hold that $a_i >w_i$
Using the inequalities in (\ref{ineq_expand}) and the Taylor series
\[
	\log(1+x)=\sum_{k=1}^\infty \frac{(-1)^{k-1}}{k}x^k, \quad |x| <1,
\]
we expand the right-hand side of (\ref{expand}) as
\[
	(\log t)e + \log a + a^{-1} \circ 
		\left( \frac{1}{t}\sum_{k=1}^\infty 
		\frac{1}{k} \left( \frac{-1}{t} \right)^{k-1} 
		u^{(\circ a^{-1})k}
		\right).
\]
Since each $u^{(\circ a^{-1})k}$ belongs to $W$ from the condition 2), 
the third term is in $V$ by the condition 1).
Further the condition 1) implies that $e \in V$, so is the first term.
This completes the proof.
\end{proof}

\begin{remark}
i) The condition 2) claims that $W$ is a {\em subalgebra} of 
$({\bf R}^{n+1},\circ_{a^{-1}})$.

\noindent 
ii) The affine subspace $\log a +V$ is independent of the choice of 
$a \in \tilde M=W \cap {\bf R}_{+}^{n+1}$.
This follows from the proof of ``if" part by taking $a'=a+w$.
\end{remark}

The following algebraic characterization of 
doubly autoparallel submanifold in ${\cal S}^n$ is immediate from
the above theorem and lemma \ref{lem1} in section \ref{prel}.
\begin{corollary}
A $\nabla^{({\rm m})}$-autoparallel submanifold 
$M=W\cap {\cal S}^n$ is doubly autoparallel if and only if 
%the subspace $W$ satisfies the condition 2), i.e, 
the subspace $W$ is a subalgebra of $({\bf R}^{n+1},\circ_{a^{-1}})$ 
with $a \in \tilde M$.
\end{corollary}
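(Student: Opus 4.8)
The plan is to reduce the corollary to the preceding Theorem by carefully tracking the two dualistic representations set up in the Preliminaries. The hypothesis is that $M = W \cap {\cal S}^n$ is $\nabla^{({\rm m})}$-autoparallel, so by the key observation following Lemma~\ref{lem1} its denormalization is exactly $\tilde M = W \cap {\bf R}_+^{n+1}$, which is the form~(\ref{m_rep}). The goal is to show that $M$ being doubly autoparallel is equivalent to $W$ being a subalgebra of $({\bf R}^{n+1}, \circ_{a^{-1}})$ for some (equivalently, by Remark~ii, any) $a \in \tilde M$. Since $M$ is already $\nabla^{({\rm m})}$-autoparallel, the only additional requirement for double autoparallelism is that $M$ be $\nabla^{({\rm e})}$-autoparallel as well, so the whole argument rests on characterizing that second condition.

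First I would invoke Lemma~\ref{lem1}: $M$ is $\nabla^{({\rm e})}$-autoparallel in ${\cal S}^n$ if and only if $\tilde M$ is $\tilde \nabla^{({\rm e})}$-autoparallel in ${\bf R}_+^{n+1}$. Next, using that $\log p_i$ are $\tilde \nabla^{({\rm e})}$-affine coordinates, $\tilde M$ is $\tilde \nabla^{({\rm e})}$-autoparallel precisely when $\log \tilde M$ has the affine form~(\ref{e_rep}), i.e.\ $\log \tilde M = b + V$ for some subspace $V$ and some $b$. Choosing $a \in \tilde M$, which exists since $M$ is nonempty, I can take $b = \log a$, so the required condition becomes exactly the existence of a subspace $V$ with $\log \tilde M = \log a + V$. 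But this is verbatim the hypothesis of the Theorem applied to our $W$ and $a$ (noting $\tilde M = W \cap {\bf R}_+^{n+1} = (a+W) \cap {\bf R}_+^{n+1}$ because $a \in W$ and $W$ is a linear subspace).

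Then I would apply the Theorem directly: such a $V$ exists if and only if conditions 1) and 2) hold, namely $V = a^{-1} \circ W$ and $u \circ a^{-1} \circ w \in W$ for all $u,w \in W$. Condition 1) merely \emph{defines} the subspace $V$ and imposes no constraint on $W$ beyond specifying $V$, whereas condition 2) is, by Remark~i), precisely the statement that $W$ is a subalgebra of $({\bf R}^{n+1}, \circ_{a^{-1}})$. Hence $\tilde \nabla^{({\rm e})}$-autoparallelism of $\tilde M$ is equivalent to $W$ being such a subalgebra, and combining with the standing $\nabla^{({\rm m})}$-autoparallel hypothesis gives the claimed equivalence for double autoparallelism.

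I expect the main subtlety—rather than a genuine obstacle—to be bookkeeping about the point $a$: the Theorem is stated for a fixed $a$, while the corollary phrasing ``with $a \in \tilde M$'' suggests independence of the choice. I would resolve this by citing Remark~ii), which asserts that the affine subspace $\log a + V$ is independent of the choice of $a \in \tilde M$; consequently the subalgebra condition, once it holds for one $a$, holds for all, so the corollary's existential/universal reading of $a$ is unambiguous. The remaining care is simply to confirm that $(a+W) \cap {\bf R}_+^{n+1}$ coincides with $W \cap {\bf R}_+^{n+1}$ under $a \in W$, which is immediate from linearity of $W$, so no further computation is needed.
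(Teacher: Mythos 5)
Your proposal is correct and follows essentially the same route as the paper, which derives the corollary directly from the theorem together with Lemma~\ref{lem1} and the representations (\ref{m_rep}) and (\ref{e_rep}); your additional bookkeeping (taking $b=\log a$, noting $(a+W)\cap{\bf R}_+^{n+1}=W\cap{\bf R}_+^{n+1}$, and invoking Remark~ii for independence of $a$) simply makes explicit what the paper leaves as ``immediate.''
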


Finally, in order to answer a natural question what structure is 
necessary and sufficient for $W$, we classify  
subalgebras of $({\bf R}^{n+1},\circ_{a^{-1}})$.
Let $q$ and $r$ be integers 
that meet $q \ge 0$, $r>0$ and $q+r={\rm dim}W <n+1$. %$q+r={\rm dim}W <n+1$
Define integers $n_l, \;l=1,\cdots,r$ satisfying
\[
	 q+\sum_{l=1}^r n_l =n+1, \quad 2 \le n_1 \le \cdots \le n_r.
\] 
Constructing subvectors $a_l \in {\bf R}^{n_l}_{+},\; l=1,\cdots,r$ 
with components arbitrarily extracted from $a \in W \cap {\bf R}_{+}^{n+1}$ 
without duplications, 
%and the subvector $a_0 \in {\bf R}^q$ constructed by the 
%remaining components in $a$, 
we denote by $\Pi$ the permutation matrix that meets
\begin{equation}
	(a_0^T \; a_1^T \; \cdots \; a_r^T )^T=\Pi a, 
\label{permutation}
\end{equation}
where the subvector $a_0 \in {\bf R}^q$ is composed of 
the remaining components in $a$.
We give the classification via the canonical form 
for $W_0=\Pi W=\{w' \in{\bf R}^{n+1}| w'=\Pi w, \; w \in W \}$ 
based on this partition instead of the original form for $W$.
\begin{theorem}
%For a subspace $W \in {\bf R}^{n+1}$ let $q$ and $r$ be nonnegative integers 
%satisfying $q+r={\rm dim}W <n+1$. 
For the above setup, $W$ is a subalgebra of $({\bf R}^{n+1},\circ_{a^{-1}})$ 
with $a \in \tilde M$ if and only if $W$ is isomorphic to
$
	{\bf R}^q \times {\bf R}a_1 \times \cdots \times {\bf R}a_r
$
and represented by $\Pi ^{-1} W_0$, where 
%are obtained by partitioning $a$, that is,
\[
	W_0=\{(y^T \; t_1 a_1^T \; \cdots \; t_r a_r^T )^T \in {\bf R}^{n+1}
	 | \;
	 \forall y \in {\bf R}^q, \; a_l \in {\bf R}^{n_l}_{+},\; \forall 
	t_l \in {\bf R}, \; l=1,\cdots,r \}.
\]
%for arbitrary $y$ and $t_i, \; i=1,\cdots,r$.
%up to permutations of components.
%Here, 
%for $a_i \in {\bf R}^{n_i}_{+}, i=1,\cdots,r$, that is,
%where $a_i$ are partitions of $a=(* \; a_1^T \; \cdots \; a_r^T )^T$ 
%with non-care subvector $*$
%
%are subvectors extracted from $a$ 
%without duplications of components.
%by partitioning $a=(* \; a_1^T \; \cdots \; a_r^T )^T$ 

%Here, each integer $n_i$ satisfies
%\[
%	\mbox{, where} \; q+\sum_{i=1}^r n_i =n+1, \quad n_i \ge 2.
%\]
\end{theorem}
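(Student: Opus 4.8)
The plan is to reduce the mutation algebra to the standard Hadamard algebra by a linear change of variables, classify the subalgebras there through their idempotents, and then translate the answer back through the permutation $\Pi$. Since $a \in {\cal I}$, the map $\phi(x):=a^{-1}\circ x$ is a linear isomorphism of ${\bf R}^{n+1}$, and a direct check gives $\phi(x\circ_{a^{-1}}y)=a^{-1}\circ x\circ a^{-1}\circ y=\phi(x)\circ\phi(y)$, so $\phi$ is an algebra isomorphism from $({\bf R}^{n+1},\circ_{a^{-1}})$ onto $({\bf R}^{n+1},\circ)$ carrying the identity $a$ to $e={\bf 1}$. Hence $W$ is a subalgebra of $({\bf R}^{n+1},\circ_{a^{-1}})$ containing $a$ if and only if $U:=a^{-1}\circ W$ is a subalgebra of $({\bf R}^{n+1},\circ)$ containing $e$. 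Thus the whole problem becomes the classification of \emph{unital} subalgebras of the ordinary componentwise-product algebra, and the hypothesis $a\in\tilde M\subset W$ is exactly what guarantees $e\in U$.

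For the core step I would show that every unital subalgebra $U$ of $({\bf R}^{n+1},\circ)$ is the space of vectors that are constant on the blocks of a partition of $\{1,\dots,n+1\}$. The key observation is that $U$ is rich in idempotents: for $x\in U$ with distinct component values $\lambda_1,\dots,\lambda_m$, Lagrange interpolation produces polynomials $P_s$ with $P_s(\lambda_t)=\delta_{st}$, and since $U$ contains $e$ and is closed under $\circ$ it contains $P_s(x)$, which is precisely the indicator vector of $\{i:x_i=\lambda_s\}$. Defining $i\sim j$ by $x_i=x_j$ for all $x\in U$, the relation $\sim$ partitions the indices into blocks $B_1,\dots,B_k$, and every element of $U$ is constant on each block by construction. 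For the reverse inclusion I would fix a block $B_m$ and, for each index $j\notin B_m$, use an element of $U$ separating $B_m$ from $j$ together with the interpolation trick to obtain an idempotent $f^{(j)}\in U$ equal to $1$ on $B_m$ and $0$ at $j$; the Hadamard product $\prod_{j\notin B_m}f^{(j)}$ then equals the indicator ${\bf 1}_{B_m}$, so ${\bf 1}_{B_m}\in U$. Spanning over $m$ shows $U$ is exactly the block-constant subspace. I expect this to be the main obstacle, since it is where the algebraic structure is actually pinned down; the fact that $x\circ x=(x_i^2)\ge 0$ (ruling out any complex factors and forcing genuine $\{0,1\}$-indicators) is implicit here.

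Finally I would translate the partition back to $W=a\circ U$. Separating the singleton blocks (there are $q$ of them) from the blocks of size $n_l\ge 2$ (there are $r$ of them) and applying the permutation $\Pi$ of (\ref{permutation}) to list the singletons first, the block-constant description of $U$ becomes: arbitrary on the $q$ singleton coordinates and proportional to ${\bf 1}_{n_l}$ on the $l$-th block. Multiplying componentwise by $a$ sends the singleton part onto ${\bf R}^q$ (as $a_0$ has nonzero entries) and sends ${\bf R}\,{\bf 1}_{n_l}$ onto ${\bf R}a_l$, which is exactly the stated form of $W_0=\Pi W$, and yields the algebra isomorphism $W\cong{\bf R}^q\times{\bf R}a_1\times\cdots\times{\bf R}a_r$. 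The converse direction is then a routine verification: for $W$ of the displayed form one checks $u\circ a^{-1}\circ w\in W$ block by block (on a size-$n_l$ block, $(s_l a_l)\circ a_l^{-1}\circ(t_l a_l)=s_l t_l a_l$) and notes $a\in W$, so condition 2) of the previous theorem holds.
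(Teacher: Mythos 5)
Your proposal is correct, and its central step is genuinely different from the paper's. Both proofs begin with the same reduction (via $V=a^{-1}\circ W$, resp.\ your $U$) to classifying unital subalgebras of the Hadamard algebra $({\bf R}^{n+1},\circ)$, and both ultimately exploit the fact that the powers $e,x,x^2,\dots$ of an element with $m$ distinct component values span an $m$-dimensional space. But the paper deploys this via the singularity of the $(n+1)\times(n+1)$ Vandermonde matrix $\Xi=(e\;x\;\cdots\;x^n)$: it concludes that every $x\in V$ has a repeated coordinate, upgrades this (by the ``finite union of proper subspaces cannot cover ${\bf R}^{q+r}$'' argument) to a \emph{common} coinciding pair for all of $V$, and then obtains the block structure by an implicit iteration; the minimality of the resulting $V_0$ (that no proper subspace with $e$ is a subalgebra) is deferred to a separate argument in the ``if'' part, again via Vandermonde. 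You instead use Lagrange interpolation to manufacture the idempotents $P_s(x)\in U$ directly, define the partition by the relation $i\sim j\Leftrightarrow x_i=x_j$ for all $x\in U$, and prove \emph{both} inclusions ($U$ is block-constant, and $U$ contains every block indicator ${\bf 1}_{B_m}$) in one pass, so the equality $U=$ block-constant space needs no separate minimality argument. This is the standard idempotent-decomposition proof for split commutative associative algebras; it is tighter and avoids the two steps the paper leaves as ``easy to see'' and ``by similar arguments.'' What the paper's route buys is an explicit determinant formula and a very short path to the necessary condition $\exists(i,j),\,x_i=x_j$; what yours buys is a self-contained constructive classification. Your closing verification of the converse (blockwise computation of $u\circ a^{-1}\circ w$ and the check $a\in W$) matches the paper's ``if'' part in substance. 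The only blemish is the aside about $x\circ x\ge 0$ ``ruling out complex factors'': it is unnecessary, since $P_s(x_i)\in\{0,1\}$ already holds by construction of the interpolating polynomial; this does not affect correctness.
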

\begin{proof}
%The ``if" part is easy to confirm.
%For``only if" part, 
(``only if" part) 
Let $V$ be a subspace in ${\bf R}^{n+1}$ defined by $V=a^{-1} \circ W$.
Then it is straightforward that $W$ is a subalgebra of 
$({\bf R}^{n+1},\circ_{a^{-1}})$ if and only if $V$ is a subalgebra 
of $({\bf R}^{n+1},\circ)$.
Using this equivalence, we consider the necessity condition.
%Hence, by assuming that $V$ is a subalgebra of $({\bf R}^{n+1},\circ)$.

Since $e$ and $x^k$ are in $V$ for any $x=(x_i) \in V$ 
and positive integer $k$, the square matrix $\Xi$ defined by
\[
	\Xi:=\left(e \; x \; \cdots \; x^n \right)=
	\left( \begin{array}{cccc}
	1 & x_1 &  \cdots & x_1^n  \cr
	1 & x_2 & \cdots & x_2^n  \cr
	\vdots & \vdots & \ddots & \vdots  \cr
	1 & x_{n+1} & \cdots & x_{n+1}^n \cr
	\end{array} \right)
\]
%\[
%	X=\left(x \; x^2 \; \cdots x^{n+1} \right)=
%	\left( \begin{array}{cccc}
%	x_1 & x_1^2 & \cdots & x_1^{n+1}  \cr
%	x_2 & x_2^2 & \cdots & x_2^{n+1}  \cr
%	\vdots & \vdots & \ddots & \vdots  \cr
%	x_{n+1} & x_{n+1}^2 & \cdots & x_{n+1}^{n+1} \cr
%	\end{array} \right)
%\]
is singular.
The determinant of the Vandermonde's matrix $\Xi$ is 
calculated using the well-known formula, as
\begin{eqnarray*}
\det \Xi=(-1)^{(n+1)n/2} \left(\prod_{i<j}  (x_i-x_j) \right).
%&=&\left(\prod_{i=1}^{n+1} x_i \right) 
%	\det \left( \begin{array}{cccc}
%	1 & x_1 & \cdots & x_1^n  \cr
%	1 & x_2 & \cdots & x_2^n  \cr
%	\vdots & \vdots & \ddots & \vdots  \cr
%	1 & x_{n+1} & \cdots & x_{n+1}^n \cr
%	\end{array} \right) \\
\end{eqnarray*}
%\begin{eqnarray*}
%\det X
%&=&\left(\prod_{i=1}^{n+1} x_i \right) 
%	\det \left( \begin{array}{cccc}
%	1 & x_1 & \cdots & x_1^n  \cr
%	1 & x_2 & \cdots & x_2^n  \cr
%	\vdots & \vdots & \ddots & \vdots  \cr
%	1 & x_{n+1} & \cdots & x_{n+1}^n \cr
%	\end{array} \right) \\
%&=&(-1)^{(n+1)n/2} \left(\prod_{i=1}^n x_i \right) 
%		\left(\prod_{i<j}  (x_i-x_j) \right).
%\end{eqnarray*}
Hence, it is necessary for $x$ to belong to $V$ that 
\begin{equation}
	\exists (i,j), \; x_i=x_j.
\label{nec1}
\end{equation}
%1) $\exists i, \; x_i=0$ or 
%2) $\exists (i,j), \; x_i=x_j$.

Denoting basis vectors of $V$ by 
$v^{(k)}=(v^{(k)}_i) \in {\bf R}^{n+1}, 
\; k=1,\cdots,q+r(={\rm dim}V)$, 
%$v^{\underline{k}}=(v^{\underline{k}}_i)$
we can represent any $x$ as $x=\sum_{k=1}^{q+r} \alpha_k v^{(k)}$ 
using a coefficient vector $(\alpha_k) \in {\bf R}^{q+r}$.
Hence, the necessary condition (\ref{nec1}) is equivalent to 
\begin{equation}
	\forall (\alpha_k) \in {\bf R}^{q+r}, \; \exists (i,j), \quad
	\sum_{k=1}^{q+r} \alpha_k (v_i^{(k)} -v_j^{(k)})=0.
\label{nec2}
\end{equation}
It is easy to see, by contradiction, that 
(\ref{nec2}) implies the following condition: 
\begin{equation}
	\exists (i,j), \; \forall k, \quad 
	v_i^{(k)} = v_j^{(k)}.
\label{nec3}
\end{equation}
By normalization $v_i^{(k)} = v_j^{(k)}=1$ for $(i,j)$ 
satisfying (\ref{nec3}) and 
a proper permutation of $i=1,\cdots,n+1$, % denoted by $\Pi$, 
we find that possible canonical form
%\footnote{Structure corresponding to $W_0$ up to permutations of components.} 
of subspace $V$, which we denote by $V_0$, is restricted to
\[
	V_0=\{(z^T \; t_1 {\bf 1}^T \; \cdots \; t_r {\bf 1}^T )^T 
	\in {\bf R}^{n+1} | \;
	\forall z \in {\bf R}^q, \; t_l{\bf 1} \in {\bf R}^{n_l},\; 
	\forall t_l \in {\bf R}, \; l=1,\cdots,r \}
\] 
for $q,r$ and $n_l, \; l=1,\cdots,r$ given in the setup.
Using the above permutation as $\Pi$ in the setup, i.e., $V=(\Pi^{-1}V_0)$, 
we have an isomorphic relation 
$W=a \circ (\Pi^{-1}V_0)$.
Thus, this means that $W_0=\Pi W=(\Pi a)\circ V_0$.

(``if" part) Conversely it is easy to confirm $V_0$ is 
a subalgebra of $({\bf R}^{n+1},\circ)$. 
We show that any other proper subspaces in $V_0$ cannot be 
a subalgebra with $e$, % of $({\bf R}^{n+1},\circ)$ 
except for the trivial cases where several $t_l$'s or components of $z=(z_i)$ 
are fixed to be zeros\footnote{These cases contradict the fact that 
$e \in V_0$.} 
or equal to each other\footnote{These cases correspond to 
choosing smaller $q$ or $r$ in the setup.}.

%In order to exclude such subspaces, 
Consider a subspace $V' \subset V_0$ with 
nontrivial linear constraints between $t_l$'s and $z_i$'s.
If $V'$ is a subalgebra, then for all $x \in V'$ and integer $m$ we have
%all coefficients $z:=(z_1 \; z_2 \; \cdots \;z_q)^T $
\[
	V' \ni x^m=\left( (z^m)^T \;\; t_1^m {\bf 1}^T \; \cdots 				\; t_r^m {\bf 1}^T\right)^T, \qquad z^m=(z_i^m) \in {\bf R}^q,
\]
where $t_l^m$'s and $z_i^m$'s should satisfy the same linear constraints.
We, however, find this is impossible by the similar arguments 
with the Vandermonde's matrix in the ``only if" part.
This completes the proof.

%$v^{(k)} v^{\underline{k}}, v^{\breve{k}}v^{\underline{1}}v^{\bar{2}}v^{(k)}$

\end{proof}

\noindent
{\bf Example (continued from section \ref{expr}):} 
As $a \in \tilde M=W \cap {\bf R}_+^{n+1}$ we set
\[
	a=(1 \;\; 2 \;\; p_3 \; \cdots \; p_{n+1} )^T, \; a_0=(1 \;\; 2)^T, \; 
	a_1=(p_3 \; \cdots \; p_{n+1})^T.
\]
Then we have $q=2, \; r=1, \; n_1=n-1$ and need no permutation, i.e., $W=W_0$.
%($\Pi=$ identity matrix).
Since every element in $W$ can be represented by
\[
	w=(\xi_1 \;\; \xi_2 \;\; t p_3 \; \cdots \; t p_{n+1} )^T, \quad 
	\xi_1,\; \xi_2,\; t \in {\bf R}
%	t:=1-\xi_1-\xi_2,
\]
we can confirm $W$ is a subalgebra of $({\bf R}^{n+1},\circ_{a^{-1}})$ 
and 
\[
	V_0=V=a^{-1} \circ W=\{(z^T \;\; t {\bf 1}^T )^T
	\in {\bf R}^{n+1} | \;
	 \forall z \in {\bf R}^2, \; t {\bf 1} \in {\bf R}^{n-1}, \; 
	\forall t \in {\bf R} \}.
\]

\section{Concluding remarks}
We have studied doubly autoparallel structure of statistical models 
in the family of probability distributions on discrete and finite sample space.
Identifying it by the probability simplex and 
using the mutation of Hadamard product, we give an algebraic characterization 
of doubly autoparallel submanifolds and their classification.

\section*{Acknowledgements}
A. O. was partially supported by JSPS Grant-in-Aid (C) 15K04997.

\end{document}